\documentclass[]{amsart}
\usepackage[utf8]{inputenc}

\usepackage{amssymb,amsmath,amsthm, bm, enumitem, mathdots, mathrsfs, lmodern, tikz-cd, url}
\usetikzlibrary{decorations.pathmorphing}
\usepackage[cal=euler, bb=ncmbbk]{mathalpha} %bfcal, bfbb

\usepackage{microtype} % makes stuff look nicer, but sometimes creates issues with nonscalable fonts
\usepackage[textsize=tiny, color=pink]{todonotes}

\usepackage[colorlinks, breaklinks]{hyperref}
\definecolor{darkpastelblue}{rgb}{0.47, 0.62, 0.8}
\definecolor{darkpastelgreen}{rgb}{0.01, 0.75, 0.24}
\definecolor{darkpastelred}{rgb}{0.76, 0.23, 0.13}
\hypersetup{
	colorlinks,
	urlcolor={darkpastelblue!80!black},
	citecolor={darkpastelgreen!80!black},
	linkcolor={darkpastelred!80!black},
}
\def\eqref#1{\textcolor{darkpastelred}{(\ref{#1})}}

\usepackage[nameinlink]{cleveref}

\Crefformat{section}{#2\S#1#3}
\Crefmultiformat{section}{#2\S\S#1#3}{ and~#2#1#3}{, #2#1#3}{, and~#2#1#3}

\setlist[enumerate]{label={\roman*\textnormal{)}}}

\theoremstyle{plain}
\newtheorem{theorem}{Theorem}[section]
\newtheorem{lemma}[theorem]{Lemma}

\newtheorem{proposition}[theorem]{Proposition}

\theoremstyle{definition}
\newtheorem{definition}[theorem]{Definition}
\newtheorem{conjecture*}{Conjecture}
\newtheorem{example}[theorem]{Example}

\theoremstyle{remark}
\newtheorem{remark}[theorem]{Remark}
\newtheorem*{remark*}{Remark}
\newtheorem*{conventions*}{Conventions}
\newtheorem*{disclaimer*}{Disclaimer}

\AddToHook{env/proposition/begin}{\crefalias{theorem}{proposition}}
\AddToHook{env/lemma/begin}{\crefalias{theorem}{lemma}}
\AddToHook{env/definition/begin}{\crefalias{theorem}{definition}}
\AddToHook{env/example/begin}{\crefalias{theorem}{example}}
\newcommand{\Abb}{\mathbb{A}}
\newcommand{\GG}{\mathbb{G}}

\newcommand{\VV}{\mathbb{V}}
\newcommand{\ZZ}{\mathbb{Z}}
\newcommand{\kk}{\mathbb{k}}

\newcommand{\bE}{\bm{E}}

\newcommand{\br}{\bm{r}}

\newcommand{\Acal}{\mathcal{A}}
\newcommand{\Bcal}{\mathcal{B}}
\newcommand{\Ecal}{\mathcal{E}}
\newcommand{\Fcal}{\mathcal{F}}
\newcommand{\Kcal}{\mathcal{K}}
\newcommand{\Mcal}{\mathcal{M}}
\newcommand{\Lcal}{\mathcal{L}}
\newcommand{\Ocal}{\mathcal{O}}
\newcommand{\Pcal}{\mathcal{P}}

\newcommand{\Ucal}{\mathcal{U}}

\newcommand{\Xcal}{\mathcal{X}}
\newcommand{\Ycal}{\mathcal{Y}}
\newcommand{\Zcal}{\mathcal{Z}}

\newcommand{\m}{\mathfrak{m}}
\newcommand{\p}{\mathfrak{p}}

\newcommand\isoto{\stackrel{\sim}{\smash{\longrightarrow}\rule{-1pt}{0.4ex}}}

\DeclareMathOperator{\op}{op}
\DeclareMathOperator{\tors}{tors}
\DeclareMathOperator{\Spec}{Spec}
\DeclareMathOperator{\Bl}{Bl}
\DeclareMathOperator{\codim}{codim}
\DeclareMathOperator{\Quot}{Quot}
\DeclareMathOperator{\Z}{Z}
\DeclareMathOperator{\rad}{rad}

\DeclareMathOperator{\Ram}{Ram}

\DeclareMathOperator{\Hom}{Hom}
\DeclareMathOperator{\End}{End}
\newcommand{\sEnd}{\mathcal{E}\! \mathit{nd}}
\newcommand{\sHom}{\mathcal{H}\! \mathit{om}}

\title{The Azumification of orders}

\author{Timothy De Deyn}
\address{Max Planck Institute for Mathematics,
	Bonn, Germany}
\email{dedeyn@mpim-bonn.mpg.de}

\begin{document}
	
	\begin{abstract}
		We construct a stacky resolution of singularities for certain noncommutative spaces which can be viewed as `finite noncommutative extensions' of schemes. 
		More precisely, we show that any order over a reduced separated finite type scheme over a field of characteristic zero can be resolved by an Azumaya algebra over a smooth Deligne--Mumford stack by a sequence of stacky blow-ups.
	\end{abstract}
	\maketitle
	
\section{Introduction}

    Noncommutative algebraic geometry is a vast field.
	Consequently, there are many different flavours of what a `noncommutative space' is.
	In this work, we consider those that can be viewed as `finite noncommutative extensions' of schemes. 
	More precisely, we consider those spaces known as \emph{orders over schemes}, which are types of coherent sheaves of algebras over schemes; see \Cref{subsec: order} for the exact definition.
	Our main result is a resolution of singularities for orders over varieties by Azumaya algebras over Deligne--Mumford stacks. 
	We call this process the \emph{Azumification of the order}.
	Let us start by stating our main theorem, after which we give some motivation.
	In the statement a \emph{stacky blow-up} refers to either a usual blow-up
	or taking a root stack.
	
	\begin{theorem}\label{thm: azumification}
		Let $\Acal$ be an order over a reduced separated scheme $X$ of finite type over a field $\kk$ of characteristic zero.
		Then there exists a sheaf of Azumaya algebras $\Bcal$ over a smooth (over $\kk$) quasi-compact separated Deligne--Mumford stack $\Xcal$ together with a proper birational morphism $\pi\colon  \Xcal \to X$, obtained as the composition of stacky blow-ups, and a morphism of $\Ocal_{\Xcal}$-algebras $\pi^*\Acal\to\Bcal$ that is generically an isomorphism.
	\end{theorem}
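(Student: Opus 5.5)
The plan is to reduce, through a sequence of stacky blow-ups, to a situation where the order is Azumaya away from a simple normal crossings divisor with tame ramification along it, and then absorb that ramification by a root stack. The outline runs as follows.

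First, apply Hironaka's resolution of singularities (characteristic zero) to $X$, and then embedded resolution or principalization. This produces a composition of blow-ups $X'\to X$ with $X'$ smooth, and an snc divisor $D\subset X'$ containing the locus where the pullback of $\Acal$ fails to be Azumaya. Here I use that $\Acal$ is generically Azumaya, since an order is generically a central simple algebra (or, if the generic fibre is only separable, a product of such over a finite extension of the function field, which requires first passing to the normalization of $X$ in the centre). Replacing $\Acal$ by its image in $\Acal\otimes K(X')$ makes it torsion free, which is harmless because we only need a map that is generically an isomorphism. We then replace it by a maximal order $\Acal'$ containing it; this is coherent because $X'$ is excellent and normal. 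The resulting map $\pi^*\Acal\to\Acal'$ is generically an isomorphism.

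Second, analyse $\Acal'$ at the generic points of the components $D_i$ of $D$. Over the DVR $\Ocal_{X',\eta_i}$, a maximal order in a central simple algebra is, after a tame (characteristic zero) totally ramified extension of degree $e_i$ equal to the ramification index, split by Azumaya algebras. More precisely, its pullback along $t\mapsto t^{1/e_i}$, followed by normalization (taking a maximal order again), is Azumaya. This is the classical structure theory of maximal orders over complete DVRs, together with the fact that tame ramification is killed by adjoining an $e$-th root of the uniformizer. We therefore take the root stack $\Xcal=\sqrt[\mathbf e]{(X',D)}$ with $\mathbf e=(e_i)$. It is a smooth separated Deligne--Mumford stack because $D$ is snc, and the map $\Xcal\to X'$ is proper and birational. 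Let $\Bcal$ be a maximal order in $\pi^*\Acal'\otimes K$ containing the image of $\pi^*\Acal'$ on $\Xcal$.

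Third, show that $\Bcal$ is Azumaya everywhere on $\Xcal$. By the second step it is Azumaya in codimension one. On a smooth (hence regular) stack, a reflexive maximal order that is Azumaya in codimension one is Azumaya: this is the purity of the Brauer group and of the Azumaya property (Auslander--Goldman) for regular schemes of dimension $\le 2$, and the general statement for regular local rings via Gabber/Česnavičius. We apply it étale locally on an atlas, using that maximal orders are reflexive. If purity is unavailable in high dimension, the fallback is to keep blowing up the non-Azumaya locus, which has codimension $\ge 2$, and to repeat, with an induction on some invariant.

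The main obstacle is the second and third steps taken together. One must ensure that the root stack exponents chosen at generic points of $D$ actually yield an Azumaya algebra near the crossings of $D$. At a crossing point the local ramification data of several components interact, and the local fundamental group is abelian of rank greater than one, so Azumaya in codimension one plus purity must genuinely hold on the stack. I would first try to verify this in the completed local picture $\kk[[x_1,\dots,x_n]]$, where after adjoining roots of the $x_i$ the Brauer class becomes unramified by Abhyankar's lemma and hence trivial or extends. If extra blow-ups are needed to make the ramification "toroidal", I would insert them before taking roots.
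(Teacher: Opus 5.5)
\textbf{There is a genuine gap in your Step 3.} Your Steps 1 and 2 follow the paper's first two steps in spirit: resolve, put the non-Azumaya locus inside an snc divisor, pass to a maximal order, and take the root stack $X_{\br^{-1}\bE}$ with the ramification indices. Step 3, however, rests on a false statement.

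Purity of the Brauer group (Gabber, \v{C}esnavi\v{c}ius, and its stacky version used in the paper) only says that the Brauer \emph{class} of $\Acal|_{\Ucal}$ extends across a complement of codimension $\geq 2$. It does \emph{not} say that a given reflexive, even maximal, order that is Azumaya in codimension one is Azumaya. The Auslander--Goldman argument works in dimension $\le 2$ only because reflexive modules over regular rings of dimension $\le 2$ are projective.

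In general, once the class is split \'etale locally, the order becomes $\sEnd(\Mcal)$ with $\Mcal$ merely reflexive. Over a regular base, $\sEnd(\Mcal)$ is Azumaya iff $\Mcal$ is locally free (by Morita theory plus factoriality).

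Here is an example where your procedure produces such an order:
\begin{itemize}
\item Take $R=\kk[x,y,z]$ and $M=\operatorname{coker}(R\xrightarrow{(x,y,z^2)}R^3)$, a reflexive rank-two module that is free away from the origin. Then $\End_R(M)$ is an order that is Azumaya exactly off the origin.
\item Your Step 1 blows up the origin. On the chart $x=uz$, $y=vz$, the torsion-free pullback of $M$ is $\Ocal^3/\Ocal\cdot(u,v,z)$, which is again reflexive and not locally free at one point. The other charts are locally free.
\item Its endomorphism sheaf is a maximal order containing $\pi^\flat\Acal$, so it is an admissible choice in your Step 1. It is unramified in codimension one, so your root stack does nothing, yet it is not Azumaya.
\end{itemize}
Your fallback of repeatedly blowing up the non-Azumaya locus also comes with no invariant guaranteeing termination.

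\textbf{The missing idea is the paper's Step 3.} Purity is used only to get, on an \'etale atlas $V\to\Xcal$, an isomorphism $\Acal|_V\cong\sEnd_V(\Mcal)$ with $\Mcal=j_*\Ecal$ reflexive. Then $\Mcal$ is made locally free by a single universal flattening $f\colon\Bl_{\Mcal}(V)\to V$ (Rossi, Villamayor). This blow-up commutes with flat base change and is unchanged by twisting $\Mcal$ by a line bundle. Hence the descent datum $\gamma\colon\Lcal\otimes p_1^*\Mcal\isoto p_2^*\Mcal$ transports to the blow-up. This yields an \'etale groupoid, hence a Deligne--Mumford stack $\Ycal\to\Xcal$, on which $\sEnd(f^\flat\Mcal)$ glues to an Azumaya algebra. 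Smoothness and the ``composition of stacky blow-ups'' form are then restored using Rydh's cofinality of blow-ups and a further resolution.

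\textbf{A secondary issue is ``a maximal order on $\Xcal$''.} Maximality is not \'etale local and maximal orders are not unique, so an equivariant choice on the root-stack atlas is not automatic. The paper avoids this with the $\ZZ^n$-graded ramification Rees algebra. That construction directly produces a reflexive order on $X_{\br^{-1}\bE}$ that is Azumaya in codimension one. It also disposes of your worry about the crossings of the divisor: they have codimension $\geq 2$ and are absorbed by the flattening step.
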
 
	\begin{remark}
		The induced morphism $\Acal\to\pi_*\Bcal$ is automatically injective, and so is necessarily an isomorphism when $\Acal$ is a \emph{maximal} order. 
		In this case, $\Acal$ is the pushforward of an Azumaya algebra over a Deligne--Mumford stack.
	\end{remark}
	
	Thus one can resolve any order by an Azumaya algebra, granted one leaves the realm of schemes and moves to Deligne--Mumford stacks.
	It is necessary to extend to stacks to obtain a birational morphism.
	If one wants to stay inside the realm of schemes, one can do this at the cost of considering alterations, i.e.\ generically finite morphisms, instead birational morphisms.
	
	The result above should be compared to the results in the recent paper by Faber--Ingalls--Okawa--Satriano \cite{Faber/Ingalls/Okawa/Satriano}. 
	There the authors show that, for any `sufficiently nice' order (see Definition 1.4 of ibid.) over a normal connected quasi-projective surface over an algebraically closed field of characteristic zero or at least seven, that the order is always \emph{Morita equivalent} to an Azumaya algebra over a Deligne--Mumford stack; that is their categories of (quasi-)coherent sheaves are equivalent.
	As a consequence, see Corollary 1.6 of ibid., they immediately obtain that any such order over a projective surface is \emph{geometrically realisable} in the sense of Orlov \cite{Orlov-dncs}, i.e.\ it admits an admissible embedding into the derived category of a smooth projective variety.

	In comparison, here we do not change the Morita class at the generic point of the order, but we also do not obtain an Azumaya algebra Morita equivalent to our original order. 
	Rather, we obtain one that is suitably birational.
	As a consequence, the result here does not directly show that orders are always geometrically realisable, in future work the current result will be used to prove so.	
	A further difference is that our results are valid in arbitrary dimension and for arbitrary orders but we are restricted to characteristic zero.
	
	Let us say some words about the proof.
	The idea is to first reduce to the case where the non-Azumaya locus is a subset of a simple normal crossings divisor.
	Then, through taking an iterated root stack, we reduce to $\Acal$ being Azumaya in codimension one; that is the non-Azumaya locus contains no divisors.
	Next, using purity, we further reduce to the case where $\Acal$ is \'etale locally the endomorphism ring of a reflexive module.
	Lastly, by a `universal flattening' in the sense of \cite{Rossi, Villamayor}, we make this reflexive module locally free, thus yielding an Azumaya algebra.
	
	The structure of the note is as follows.
	We start with some preliminaries in \Cref{sec: prelim}, this includes stacky recollections, how to remove ramification of orders via root stacks, and how to blow-up modules on schemes.
	This being done, we prove \Cref{thm: azumification} in \Cref{sec: proof}.

	\begin{conventions*}
		As we will need (strong) resolutions of singularities, everything takes place over a field $\kk$ of characteristic zero. 
		In particular smoothness will always be over $\kk$.
	\end{conventions*}
	
	\begin{disclaimer*}
		For ease, we restrict the main theorem to reduced separated schemes of finite type over $\kk$, using more elaborate resolution machinery one can weaken this, e.g.\ replacing finite type to excellent. 
	\end{disclaimer*}
	
	\subsection*{Acknowledgements}
	This work was started a while back during the author's PhD studies at the Vrije Universiteit Brussel under the supervision of Michel Van den Bergh.
	The author would like to thank him for generously sharing his knowledge and his help throughout the PhD. 
	Furthermore, the author is grateful to Max Planck Institute for Mathematics in Bonn for its hospitality and financial support.
	
\section{Preliminaries}\label{sec: prelim}
	\subsection{Re: stacks}
		We briefly recall some preliminaries on algebraic stacks.
		Throughout we use the conventions of the Stacks Project \cite{stacks-project} concerning the definitions of algebraic stacks and
		algebraic spaces.
		
	\subsubsection{Codimension}
		Codimension for locally Noetherian algebraic stacks is defined as one would expect using a smooth atlas \cite[Definition 6.1]{Osserman}.
		Namely, let $\Zcal\subseteq\Xcal$ be a non-empty closed substack of a locally Noetherian stack, then 
		\[
		\codim_\Xcal(\Zcal):= \codim_U(U\times_\Xcal \Zcal)
		\]
		where $U\to \Xcal$ is a smooth atlas.
		We refer to loc.~cit.\ for details.
				 
	\subsubsection{Root stack}
		The root stack construction is a way of adjoining roots of one or more divisors on a scheme (or algebraic stack), adding stacky structure along the divisors and being an isomorphism outside the union of the divisors.
		We briefly recall this here, see e.g.\ \cite[\S 3]{Cadman}, \cite[Appendix B]{AbramovichGraberVistoli}, \cite[\S 2.1]{BayerCadman}, \cite[\S 1.3]{FantechiMannNironi} or \cite[\S 3]{BerghLuntsSchnurer} for more details.
		
		Let $X$ be a scheme (or more generally an algebraic stack), $E$ an effective Cartier divisor and $r$ a positive integer. 
		The \emph{$r$-th root stack of $X$ with respect to $E$} is defined as the pullback diagram
		\[
			\begin{tikzcd}[ampersand replacement=\&]
				{X_{r^{-1}E}} \& {[\Abb^1/\GG_m]} \\
				X \& {[\Abb^1/\GG_m]}
				\arrow[from=1-1, to=2-1]
				\arrow["{x\mapsto x^r}", from=1-2, to=2-2]
				\arrow[from=1-1, to=1-2]
				\arrow[from=2-1, to=2-2]
				\arrow["\lrcorner"{anchor=center, pos=0.125}, draw=none, from=1-1, to=2-2]
			\end{tikzcd}
		\]
		where the lower morphism $X\to [\Abb^1/\GG_m]$ corresponds to the line bundle $\Ocal_X(E)$ together with the canonical global section $\Ocal_X \to \Ocal_X(E)$.
		
		More generally, let $\bE=(E_1,\dots, E_n)$ be a tuple of effective Cartier divisors and $\br=(r_1,\dots,r_n)$ be a tuple of positive integers. 
		The \emph{$\br$-th root stack of $X$ with respect to $\bE$}, denoted $X_{\br^{-1}\bE}$, is defined as the fibre product of the root stacks $X_{r_i^{-1}E_i}$ over $X$ for $1\leq i\leq n$, i.e.\ 
		\[
			X_{\br^{-1}\bE}	:= X_{r_1^{-1}E_1} \times_X \dots \times_X X_{r_n^{-1}E_n}.
		\]		
		When $X$ is smooth and $\bE$ is a simple normal crossing divisor, the root stack is itself smooth, see e.g.\ \cite[Proposition 3.9]{BerghLuntsSchnurer}. 
		
		\begin{example}\label{ex:local_root}
			For future reference, let us give a local model for the root stack.
			Suppose $X=\Spec(R)$ and $E=\VV(f)$ where $f$ is a regular element, then 
			\[
				X_{r^{-1}E} = \left[ \Spec(R[x]/(x^r-f))/\mu_r \right]
			\]
			where the action is given by $\zeta \cdot x = \zeta x$.
			Furthermore, supposing for simplicity that $R$ is an integral domain, one can also write this as a $\GG_m$-quotient as follows.
			Define the \emph{scaled Rees algebra with step $r$} to be
			$\widetilde{R}:= \oplus_{i\in\ZZ} f^{\lceil i/r\rceil}Rt^i\subseteq \Quot(R)[t,t^{-1}]$.
			Then
			\[
			X_{r^{-1}E} = \left[ \Spec\left(\widetilde{R}\right)/\GG_m \right]
			\]
			with the action coming from the $\ZZ$-grading.
			This can be proven using \cite[Lemma A.1]{Abramovich/Quek} taking $(R,A,a,r)$ from loc.\ cit.\ to be $(\widetilde{R}, \ZZ, r, ft^r)$ and noting $\widetilde{R}/(ft^r-1)\cong R[x]/(x^r-f)$.
		\end{example}
					
	\subsection{Orders over schemes and stacks}\label{subsec: order}

		A quasi-coherent sheaf $\Fcal$ over a scheme $X$ is \emph{torsion-free} when, for all $x\in X$, $\Fcal_x$ is a torsion-free $\Ocal_{X,x}$-module. 
		More generally, a quasi-coherent sheaf $\Fcal$ over a (reduced locally Noetherian\footnote{Checking that torsion-freeness is flat local is subtle in general, but relatively straightforward when reduced and locally Noetherian (see also the beginning of \Cref{subsec:blowup}).}) algebraic stack $\Xcal$ is \emph{torsion-free} when $p^*\Fcal$ is torsion-free for some smooth atlas $p\colon U\to \Xcal$ (and hence all).

		An order $\Acal$ over a (reduced locally Noetherian) algebraic stack $\Xcal$ is a torsion-free coherent sheaf of algebras that is generically Azumaya, i.e.\ there exists a dense open substack $\Ucal\subset\Xcal$ such that the restricted sheaf of algebras $\Acal|_{\Ucal}$ is Azumaya.
		More concretely, over an integral scheme with generic point $\eta$ this means that there are no non-zero sections which become zero in the stalk at $\eta$ and that $\Acal_\eta$ is a central simple $\kk(\eta)$-algebra.
		
		To define maximal orders we restrict for simplicity to the case where $X$ is an integral Noetherian scheme with generic point\footnote{The notion of a maximal order over an algebraic stack is not that useful since being maximal is not closed under \'etale extensions.} $\eta$.
		In this case, let $A$ be a central simple algebra over $\kk(\eta)$.
		An order $\Acal$ over $X$ is called an \emph{order in $A$} if $\Acal_\eta= A$.
		A \emph{maximal order in $A$ over $X$} is an order which is not a proper subalgebra of any other order in $A$ over $X$.
		A \emph{maximal order} is an order $\Acal$ maximal in $\Acal_\eta$.
		Any Azumaya algebra over a normal scheme is maximal.  
		Under quite general hypotheses on $X$ (and $A$) maximal orders containing a given order exist, see \cite[Theorem 1.2]{Yu}.
		Taking a maximal order is akin to taking the integral closure in commutative algebra.
		However, it should be noted that maximal orders are generally not unique.
		
		Lastly, recall that an order (again over a scheme for simplicity) is called \emph{tame}, if it is reflexive as $\mathcal{O}_X$-module and its stalks at codimension one points are hereditary rings, i.e.\ they have global dimension one.
		Maximal orders are always tame over normal schemes.

	\subsection{Ramification}
		We define the ramification data of tame orders.
		This measures the failure of the order to be Azumaya.

		\subsubsection{Local}
			Let $(R,\m)$ be a DVR and denote its residue field $\kappa:=R/\m$.
			Assume for simplicity that $\kappa$ is perfect. 
			
			Consider a tame, i.e.\ hereditary, $R$-order $\Lambda$ (in a central simple algebra).
			Let $\kappa':=\Z(\Lambda/\rad\Lambda)$, where $\rad$ denotes the Jacobson radical; it is a product of cyclic field extension of $\kappa$ (this follows from the structure theorem of hereditary orders over complete DVRs).
			Define the \emph{ramification index of $\Lambda$} to be the integer $r:=\dim_\kappa\kappa'$. 
			One has that $r$ is the minimal integer such that $(\rad\Lambda)^r = {\m}{\Lambda}$. 
			The ramification index measures the failure of $\Lambda$ to be Azumaya, i.e.\ of $\Lambda/\m\Lambda$ being a central simple algebra, as one can show that $\Lambda$ is Azumaya if and only if $r=1$.
			
		\subsubsection{Global}\label{subsubsec: global ram}
			Let $\Acal$ be a tame order over a normal Noetherian scheme and let $\Ram(\Acal)$ denote the ramification locus of $\Acal$, i.e.\ the non-Azumaya locus 
			\[
			\{x\in X \mid \Acal_x\text{ is not an Azumaya }\Ocal_{X,x}\text{-algebra}\}.
			\]
			This is a proper closed\footnote{
				The condition that $\Acal$ is Azumaya, i.e.\ is locally free and $\Acal\otimes_{\Ocal_X}\Acal^{\op}\to \sEnd_{\Ocal_X}(\Acal)$ is an isomorphism, is clearly an open condition. (See e.g.\ \cite[\href{https://stacks.math.columbia.edu/tag/0B8J}{Tag 0B8J} and \href{https://stacks.math.columbia.edu/tag/01Y3}{Tag 01Y3}]{stacks-project}.)
			}
			subset (which we can view as a reduced subscheme, if we want).
			We define the \emph{ramification data of $\Acal$} to be the divisors $\bE=(E_1,\dots, E_n)$ contained in $\Ram(\Acal)$ and the corresponding ramification indices $\br=(r_1,\dots,r_n)$ at their generic points, i.e.\ $r_i$ is the ramification index of $\Acal_{E_i}$ as $\Ocal_{X,E_i}$-order, where by abuse of notation we let $E_i$ also denote its generic point.
			
		\subsection{Rooting out ramification}
			We show how one can (birationally) get rid of ramification in height one by taking root stacks. 
			In fact, taking root stacks is not strictly needed for removing the ramification. 
			However, it is needed if one wants to remain in the same birationality class.
			Pertinent references where we pull content from are \cite{LeBruyn/VandenBergh/VanOystaeyen}, \cite[Chapter 5]{Reiten/VandenBergh}, \cite[\S\S2\&3]{Faber/Ingalls/Okawa/Satriano} and \cite[\S2.1]{Baumann/Belmand/VanGarderen}.
			Although an important difference between the last three references and what we do below is that we do a $\ZZ^n$-graded version instead of merely a $\ZZ$-graded version.
			This is needed in order to obtain a root stack of the form $X_{\br^{-1}\bE}$ instead of $X_{\operatorname{lcm}(r_1,\dots r_n)^{-1}(E_1+\dots+E_n)}$.
			
			\subsubsection{(Affine) local}
				Suppose $R$ is a normal Noetherian domain with fraction field $K$ and let $\Lambda$ be a tame $R$-order in a central simple $K$-algebra $A=K\Lambda$. 
				
				We start by recalling some terminology: a finitely generated $\Lambda$-subbimodule $I\subset A$ is called a \emph{fractional ideal (of $\Lambda$)} if it satisfies $KI=A$; moreover, it is called a \emph{divisorial ideal} if it is additionally reflexive as $R$-module and $I_\p$ is invertible for every prime $\p\subset R$ of height one.
				For such a divisorial ideal, define the $n$th symbolic power
				\[
				I^{(n)}:= \begin{cases}
							(I^{ n})^{**} &\text{for } n> 0, \\
							\Lambda &\text{for } n= 0, \\
							[\Lambda:I]^{(-n)} &\text{for } n< 0, 
						\end{cases}
				\]
				where $[\Lambda:I]:=\{ a\in A\mid aI\subseteq \Lambda\}$ and $(-)^*$ denotes the $R$-dual $\Hom_R(-,R)$.
				Divisorial ideals form a group under $I\cdot J:=(I\otimes_R J)^{**}=(IJ)^{**}$.
				 
				Next, denote the ramification data of $\Lambda$ (as defined in the previous subsection) $\bE=(E_1,\dots, E_n)$ and  $\br=(r_1,\dots,r_n)$; denote the generic point of $E_i$ by $\p_i$.
				Let
				\[
					P_i:= \rad \Lambda_{\p_i}\cap \Lambda \subset \Lambda,
				\]
				this is a divisorial ideal containing $\p_i\Lambda$ with $r_i$ the minimal integer such that $P_i^{(r_i)}
				= (\p_i\Lambda)^{**}$.
				Moreover, it is the unique divisorial ideal satisfying the following at height one primes
				\begin{equation}\label{eq: def_prop_Pi}
					(P_i)_{\p}:= \begin{cases}
						\rad \Lambda_{\p_i} &\text{for } \p=\p_i, \\
						\Lambda &\text{for } \p\neq\p_i. 
							\end{cases}
				\end{equation}

				Define the \emph{$\ZZ^n$-graded ramification Rees algebra}
				\begin{align*}
					\widetilde{\Lambda}&:= \bigoplus_{(i_1,\dots, i_n)\in \ZZ^n}  P_1^{(i_1)} \cdot P_2^{(i_2)}  \cdots P_n^{(i_n)} t_1^{i_1}t_2^{i_2}\dots t_n^{i_n}\\ &\subset \Lambda[t_1,t_1^{-1},t_2,t_2^{-1},\dots,t_n,t_n^{-1}].
				\end{align*}
				Note that this is indeed an algebra as 
$P_i\cdot P_j=P_i\cap P_j=P_j\cdot P_i$ for $i\neq j$ which may be verified by
checking at height one primes.
				
				The main upshot of this construction is that $\widetilde{\Lambda}$ is a (graded) maximal order over its centre by \cite[Theorem II.4.38 and Remark II.4.40]{LeBruyn/VandenBergh/VanOystaeyen}, and is reflexive Azumaya over its centre by combining loc.\ cit.\ with \cite[Theorem II.2.24]{LeBruyn/VandenBergh/VanOystaeyen}.
				The latter means that it is reflexive as module over its centre and is Azumaya at codimension one points (instead of merely hereditary as $\Lambda$ was)\footnote{To explain the terminology `reflexive Azumaya' a bit; an $R$-algebra $\Lambda$ is reflexive Azumaya (i.e.\ reflexive as $R$-module and Azumaya at the height one primes of $R$) if and only if $\Lambda$ is reflexive as $R$-module and $(\Lambda \otimes_R \Lambda^{\op})^{**}\to \End_R(\Lambda)$ is an isomorphism, i.e.\ $\Lambda$ is an `Azumaya object' in the closed symmetric monoidal category of (finitely generated) reflexive modules $(\mathsf{ref}(R), (-\otimes_R-)^{**})$. As reflexive Azumaya algebras are automatically orders (and maximal if the centre is normal), we sometimes say reflexive Azumaya order to emphasise this.}.
				Its centre can also be made explicit, it is the so-called \emph{scaled
				Rees ring with step $(r_l,\dots, r_n)$}; the $(i_1,\dots, i_n)$th graded piece has coefficients in
				\[
					 \p_1^{(\lceil i_1/r_1\rceil)} \cdot \p_2^{(\lceil i_2/r_2\rceil)}  \cdots \p_n^{(\lceil i_n/r_n\rceil)}.
				\]
				When the $\p_i's$ are principal, taking the stacky quotient of the centre by the natural $\GG_m^n$-action gives a local model for the iterated root stack, c.f.\ \Cref{ex:local_root}.

			\subsubsection{Global}
			
				Let us now give the global version of the above.
				Fix a normal integral Noetherian scheme $X$ and suppose $\Acal$ is a tame order (in a central simple $\kk(X)$-algebra $A$) with ramification data $\bE=(E_1,\dots, E_n)$ and  $\br=(r_1,\dots,r_n)$.
				
				By \Cref{eq: def_prop_Pi}, and the fact that for a reflexive module $M$ over a domain $M=\cap_{\operatorname{ht}\p=1}M_\p$, it is clear that the affine local $P_i$'s glue giving (divisorial ideal) sheaves $\Pcal_i$ uniquely defined by the analogous stalk local property as \Cref{eq: def_prop_Pi}.
				Therefore, we can define the (global) \emph{$\ZZ^n$-graded ramification Rees algebra}
				\begin{align*}
					\widetilde{\Acal}&:= \bigoplus_{(i_1,\dots, i_n)\in \ZZ^n}  \Pcal_1^{(i_1)} \cdot \Pcal_2^{(i_2)}  \cdots \Pcal_n^{(i_n)} t_1^{i_1}t_2^{i_2}\dots t_n^{i_n}\\ 
					&\subset \underline{A}[t_1,t_1^{-1},t_2,t_2^{-1},\dots,t_n,t_n^{-1}]
				\end{align*}
				where $\underline{A}$ is the constant sheaf on $A=\Acal\otimes_{\Ocal_X} \kk(X)$.
				This is a maximal order and a reflexive Azumuya algebra over its centre as this can be checked affine locally.
				Moreover, the centre $\Z(\widetilde{\Acal})$ of $\widetilde{\Acal}$ admits a natural $\GG_m^n$-action corresponding to its $\ZZ^n$-grading.
				
				The following is the main result of this subsection.
				
				\begin{proposition}\label{prop:rooting}
					Let $(X,\Acal)$ and $\widetilde{\Acal}$ be as above and suppose $\bE=(E_1,\dots, E_n)$ is a tuple of effective Cartier divisors.
					Then
					\[	
						\left[\underline{\Spec}_X \left( \Z(\widetilde{\Acal}) \right) / \GG_m^n\right] = X_{\br^{-1}\bE}
					\]
					and under this identification $\widetilde{\Acal}$ corresponds to a reflexive Azumaya order $\Bcal$.
				\end{proposition}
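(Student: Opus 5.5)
The plan is to reduce to the affine local situation and compare with the local models of the root stack from \Cref{ex:local_root}. Both sides are compatible with restriction to opens of $X$. The left-hand side is relative, since $\underline{\Spec}_X$ and the $\GG_m^n$-quotient commute with base change along open immersions. The root stack $X_{\br^{-1}\bE}$ is defined as a fibre product over $X$, so it is local on $X$ as well. It therefore suffices to construct an isomorphism over an affine open cover $X=\bigcup U_\alpha$, with $U_\alpha=\Spec(R)$ and each $E_i\cap U_\alpha=\VV(f_i)$ principal. We must also check that these isomorphisms are canonical enough to glue. Over each such $U_\alpha$, the discussion preceding the proposition identifies the $(i_1,\dots,i_n)$th graded piece of $\Z(\widetilde{\Acal})$. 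It is $\p_1^{(\lceil i_1/r_1\rceil)}\cdots\p_n^{(\lceil i_n/r_n\rceil)}t_1^{i_1}\cdots t_n^{i_n}$. When $\p_i=(f_i)$ is principal with $f_i$ regular, the symbolic powers are simply $f_i^{\lceil i/r_i\rceil}R$. The product of these divisorial ideals, taken in the reflexive sense, equals $f_1^{\lceil i_1/r_1\rceil}\cdots f_n^{\lceil i_n/r_n\rceil}R$, because products of principal ideals are already reflexive. Hence the centre is the tensor product over $R$ of the $n$ one-variable scaled Rees algebras $\widetilde{R}_j=\bigoplus_i f_j^{\lceil i/r_j\rceil}Rt_j^i$, with the product $\ZZ^n$-grading.

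Next, \Cref{ex:local_root} gives $[\Spec(\widetilde{R}_j)/\GG_m]=X_{r_j^{-1}E_j}$. Since the fibre product of quotient stacks $[\Spec B_j/\GG_m]$ over $\Spec R$ is $[\Spec(\bigotimes_R B_j)/\GG_m^n]$, we obtain $[\Spec \Z(\widetilde{\Acal})/\GG_m^n]\cong X_{r_1^{-1}E_1}\times_X\dots\times_X X_{r_n^{-1}E_n}=X_{\br^{-1}\bE}$ over $U_\alpha$. For gluing, I would avoid choosing the $f_i$. Instead I would describe the map $[\underline{\Spec}_X\Z(\widetilde{\Acal})/\GG_m^n]\to X_{\br^{-1}\bE}$ intrinsically. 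The degree $-e_j$ part of $\Z(\widetilde{\Acal})$, namely $\Ocal_X t_j^{-1}$, together with the degree $r_je_j$ part $\Ocal_X(-E_j)t_j^{r_j}$, determines a line bundle $\Lcal_j$ on the quotient stack. The inclusion of the degree $-r_je_j$ piece gives $\Lcal_j^{\otimes r_j}\cong\Ocal(E_j)$, and $t_j^{-1}$ gives a section $s_j$ with $s_j^{r_j}$ equal to the canonical section. This data defines the morphism to each $X_{r_j^{-1}E_j}$ by the universal property of the pullback defining the root stack. The local computation above shows the morphism is an isomorphism. Sign conventions for the grading must be matched with \Cref{ex:local_root}.

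For the second claim, $\widetilde{\Acal}$ is a $\ZZ^n$-graded, hence $\GG_m^n$-equivariant, coherent algebra over $\Z(\widetilde{\Acal})$. It therefore descends to a coherent sheaf of algebras $\Bcal$ on the quotient stack, and pulling $\Bcal$ back along the smooth atlas $\underline{\Spec}_X\Z(\widetilde{\Acal})\to X_{\br^{-1}\bE}$ returns $\widetilde{\Acal}$. Being reflexive, Azumaya in codimension one, and torsion-free are all properties that can be checked after pullback along a smooth atlas. The codimension of a closed substack is defined via the atlas, and reflexivity and Azumaya-ness are preserved and detected by smooth, flat base change. So $\Bcal$ is reflexive Azumaya because $\widetilde{\Acal}$ is reflexive Azumaya over its centre, as stated above. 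That is, $\widetilde{\Acal}$ is reflexive as a module over its normal centre, and $(\widetilde{\Acal}\otimes\widetilde{\Acal}^{\op})^{**}\to\sEnd(\widetilde{\Acal})$ is an isomorphism.

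The main obstacle I expect is the identification of the centre in the principal case. One must verify carefully that the reflexive product of symbolic powers of the $\p_i$ agrees with the honest product of powers of the $f_i$. One must also check that the centre is the full tensor product of the one-variable scaled Rees algebras rather than merely contained in it. I would do this by checking both sides at height one primes, where only one $\p_i$ is relevant, and then using that both modules are reflexive.
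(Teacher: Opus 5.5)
Your proposal is correct and follows the paper's proof. You reduce to affine opens where the $E_i$ are principal, identify $\Z(\widetilde{\Acal})$ with the scaled Rees ring with step $\br$ and invoke \Cref{ex:local_root}, then descend the $\ZZ^n$-graded algebra $\widetilde{\Acal}$ and check the reflexive Azumaya property on the atlas $\underline{\Spec}_X(\Z(\widetilde{\Acal}))\to X_{\br^{-1}\bE}$; your extra details (the centre as a tensor product of one-variable scaled Rees algebras, the fibre product of quotient stacks, and a global morphism built from the root stack's universal property to justify gluing) supply steps the paper leaves implicit.
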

				\begin{proof}
					The identification of the root stack with the quotient stack can be checked locally on $X$.
					So, we may assume $X$ is affine and the divisors $\bE$ are principal.
					In this case it follows from \Cref{ex:local_root} and the description of $\Z(\widetilde{\Acal})$ as the scaled Rees ring with step $\br$.
					
					To see the second statement, as $\widetilde{\Acal}$ is a $\ZZ^n$-graded $\Z(\widetilde{\Acal})$-algebra it corresponds to an algebra $\Bcal$ over $\left[\underline{\Spec}_X\left( (\Z\widetilde{\Acal}) \right)/\GG_m^n\right] = X_{\br^{-1}\bE}$.
					That this algebra is a reflexive Azumaya order can be seen, e.g., by using the characterisation of being an `Azumaya object' in the category of coherent reflexive sheaves (or simply by using the atlas $\underline{\Spec}_X\left( (\Z\widetilde{\Acal}) \right) \to X_{\br^{-1}\bE}$).
				\end{proof}

	\subsection{Blowing up modules}\label{subsec:blowup}

		In general, even for Noetherian schemes, the sheaf of meromorphic functions $\Kcal_X$ need not be quasi-coherent \cite{Kleiman}.
		Therefore, defining a `correct' notion of quasi-coherent torsion submodule (and getting this to behave well with respect to pulling back along flat morphisms) is subtle.
		In the reduced locally Noetherian setting these issues disappear, so we restrict to those schemes in this subsection.
		
		In fact, for a reduced locally Noetherian scheme $X$ and a (quasi-)coherent module $\Mcal$, the torsion submodule 
		\[
			\tors(\Mcal) := \ker \left(\Mcal\to \Mcal\otimes_{\Ocal_X}\Kcal_X \right) \subseteq \Mcal
		\]	
		is (quasi-)coherent and has as sections over an open subset $U\subseteq X$ those sections in $\Mcal(U)$ that become zero in the stalks at the generic points of the irreducible components of $U$.
		One can see this using, e.g., \cite[\href{https://stacks.math.columbia.edu/tag/02OW}{Tag 02OW}]{stacks-project}.
		
		\begin{definition}
			Let $X$ be a reduced locally Noetherian scheme and $\Mcal$ a coherent $\Ocal_X$-module of constant generic rank $r$, i.e.\ $\Mcal$ is locally free of rank $r$ on a dense open subset of $X$\footnote{			
				As we assume $X$ reduced, this is equivalent to the ranks at the stalks of the generic points of the irreducible components of $X$ equalling $r$.
				(Every module is generically locally free, as the local rings at the generic points of the irreducible components are fields, the real requirement is that the rank has to be constant.)
			}.
			A morphism $f\colon Y\to X$ (from a reduced locally Noetherian scheme $Y$) is called \emph{a flattening of $\Mcal$} if 
			\[
				f^\flat(\Mcal):=f^*(\Mcal)/\tors(f^*(\Mcal))
			\]
			is locally free of rank $r$.
			The flattening $f$ is called \emph{universal} if every other flattening of $\Mcal$ factors uniquely through $f$.
			The universal flattening of $\Mcal$ exists (see below) and is called the \emph{blow-up of $X$ at $\Mcal$}; it (or rather, its source) is denoted by $\Bl_\Mcal(X)$.
		\end{definition}
		\begin{remark}\hfill
			\begin{enumerate}
				\item A finitely presented flat module is locally free.
				So, requiring $f^\flat(\Mcal)$ to be locally free (of finite rank) is the same as requiring it to be flat; this explains the name flattening.
				\item The notion of flattening makes sense for more general locally Noetherian schemes (as long as the sheaf of meromorphic functions is quasi-coherent), but the conditions on $\Mcal$ are a bit more subtle.
				One requires $\Mcal \otimes_{\Ocal_X} \Kcal_X = (\Kcal_X)^r$ for some $r$.
				\item The rank condition is necessary for obtaining a universal property.
				For example, let $R=k[x,y]$ be the polynomial ring in two variables and $I=(x,y)$ the maximal ideal of the origin (ideals, of integral schemes, are always of constant generic rank 1).
				Consider $f\colon \Spec(R/I)\hookrightarrow \Spec R$, then $f^\flat(I)=k^2$. Showing that the rank can increase. 
				For the blow-up $g\colon \Bl_I(\Spec R)\to \Spec R$ we have that $g^\flat(I)=g^{-1}(I)\cdot\Ocal$ is locally free of rank one (one way to see the equality is using that $g$ is a dominant morphism between integral scheme).			
				Suppose there existed a universal flattening $h\colon X\to\Spec R$ without fixing the rank, i.e.\ we only require $\mathcal{I}:=h^\flat(I)$ to be locally free. 
				Then for $f$ to factor through $h$ we need $\mathcal{I}$ locally free of rank two, but for $g$ to factor through $h$ we need $\mathcal{I}$ locally free of rank one.
			\end{enumerate}
		\end{remark}
		
		The universal flattening exists, and thus is unique up to unique isomorphism.
		It can be constructed as a certain subscheme of a Quot scheme, or by blowing up a specific coherent ideal (which is the fitting ideal of a module constructed from $\Mcal$); see \cite{Rossi, Riemenschneider, Villamayor}.
		The latter construction shows that the blow-up is indeed a reduced locally Noetherian scheme, and is quasi-compact resp.\ integral when $X$ is.
		Moreover, it follows that the blow-up is projective and birational\footnote{
			By the universal property, and \Cref{lem: compat flat smooth cov} below, the universal flattening is an isomorphism over the open subset where $\Mcal$ is flat, and this locus is dense by assumption.
			By \cite[\href{https://stacks.math.columbia.edu/tag/0BFM}{Tag 0BFM}]{stacks-project} the inverse image of this open subset also contains the generic points of all the irreducible components of the blow-up.
			Hence this is birational in the sense of \cite[\href{https://stacks.math.columbia.edu/tag/01RO}{Tag 01RO}]{stacks-project}.
		}.
		
		The following lemmas will allow us to extend the existence of the universal flattening to algebraic stacks.
		\begin{lemma}\label{lem: pb flat morph}
			For any flat morphism $f\colon Y\to X$ between reduced locally Noetherian schemes and quasi-coherent module $\Mcal$ we have
			\[
				f^*(\tors(\Mcal))=\tors(f^*(\Mcal)).
			\]
		\end{lemma}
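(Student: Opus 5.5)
The plan is to reduce to the definition of $\tors$ as the kernel of $\Mcal\to\Mcal\otimes_{\Ocal_X}\Kcal_X$ and to use flatness of $f$ twice. First, since $f$ is flat, $f^*$ is exact. Applying it to the exact sequence $0\to\tors(\Mcal)\to\Mcal\to\Mcal\otimes_{\Ocal_X}\Kcal_X$ gives
\[
f^*(\tors(\Mcal))=\ker\bigl(f^*\Mcal\to f^*(\Mcal\otimes_{\Ocal_X}\Kcal_X)\bigr).
\]
So it suffices to show that the natural map $f^*\Mcal\to f^*\Mcal\otimes_{\Ocal_Y}\Kcal_Y$ has the same kernel. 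This map factors as
\[
f^*\Mcal\to f^*\Mcal\otimes_{f^{-1}\Ocal_X}f^{-1}\Kcal_X\to f^*\Mcal\otimes_{\Ocal_Y}\Kcal_Y .
\]

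The key input is the behaviour of generic points under flat maps. For a flat morphism of locally Noetherian schemes, generalisations lift (going down). Hence every generic point $\xi$ of an irreducible component of $Y$ maps to a generic point $f(\xi)$ of an irreducible component of $X$. This gives the inclusion $f^*\tors(\Mcal)\subseteq\tors(f^*\Mcal)$, using the concrete description of torsion sections. A local section of $\tors(\Mcal)$ vanishes at all generic points of $X$, so its pullback vanishes at every $\xi$. Indeed, the stalk of the pullback at $\xi$ is $\Mcal_{f(\xi)}\otimes\Ocal_{Y,\xi}$.

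For the reverse inclusion I would work locally and affinely: $X=\Spec A$, $Y=\Spec B$, $A\to B$ flat, both rings reduced and Noetherian, and $M$ an $A$-module. Since $A$ is reduced Noetherian, its total ring of fractions is $Q(A)=\prod_{\mathfrak q}A_{\mathfrak q}$ over the minimal primes, and $\tors(M)=\ker(M\to M\otimes_A Q(A))$. The same holds for $B$. Tensoring with $B$ and using flatness gives
\[
\tors(M)\otimes_A B=\ker\bigl(M\otimes_A B\to M\otimes_A Q(A)\otimes_A B\bigr).
\]
Flatness also implies that nonzerodivisors of $A$ map to nonzerodivisors of $B$. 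So $Q(A)\otimes_A B$ is a localisation of $B$ at a multiplicative set of nonzerodivisors, and it therefore embeds into $Q(B)$. It remains to show that $M\otimes_A Q(A)\otimes_A B\to M\otimes_B Q(B)$ is injective. Write $B'=Q(A)\otimes_A B$, a reduced Noetherian ring with $Q(B')=Q(B)$, and $N=M\otimes_A Q(A)$. Since $Q(A)$ is a finite product of fields, $N$ is a direct sum of copies of the factor fields, so it is flat, hence torsion-free. Its base change $N\otimes_{Q(A)}B'$ is then flat over $B'$. A flat module over a reduced Noetherian ring injects into its localisation at the total ring of fractions, because that localisation map is injective on $B'$ and tensoring preserves injectivity. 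This gives $\tors(M\otimes_A B)\subseteq\tors(M)\otimes_A B$.

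Finally, both sides are quasi-coherent subsheaves of $f^*\Mcal$, so equality can be checked affine locally, which completes the argument. I expect the main obstacle to be the bookkeeping in the reverse inclusion. Specifically, one must justify that $Q(A)\otimes_A B\to Q(B)$ is injective and that a flat module over a reduced Noetherian ring is torsion-free. Both follow from flatness preserving injections together with the product-of-fields description, which is exactly where reducedness and Noetherianity are used. Quasi-coherence of $\Kcal$ in this setting, as noted before the lemma via Tag 02OW, lets us pass freely between the sheaf and affine descriptions.
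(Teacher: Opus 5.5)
Your proof is correct and has the same overall shape as the paper's. Your first inclusion is exactly the paper's first step: flat maps send generic points of components of $Y$ to generic points of components of $X$, so $f^*(\tors(\Mcal))$ has zero stalks there.

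The reverse inclusion is where the two arguments differ in form. The paper applies $f^*$ to $\Mcal/\tors(\Mcal)$ and asserts, without proof, that flat pullback preserves torsion-freeness. Hence $\tors(f^*\Mcal)$ dies in $f^*\Mcal/f^*(\tors(\Mcal))$.

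Your affine computation is what justifies that assertion. You note that $Q(A)\otimes_A B$ is a localisation of $B$ at nonzerodivisors. You note that $M\otimes_A Q(A)$ is flat over the finite product of fields $Q(A)$. Hence $M\otimes_A Q(A)\otimes_A B\to M\otimes_B Q(B)$ is injective. This makes your version self-contained, and it shows where reducedness of $X$ is used: every $Q(A)$-module is flat.

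Your injectivity statement also proves slightly more than you claim. It shows directly that the two kernels coincide:
\[
\tors(M)\otimes_A B=\ker\bigl(M\otimes_A B\to M\otimes_A Q(A)\otimes_A B\bigr)
\]
and
\[
\tors(M\otimes_A B)=\ker\bigl(M\otimes_A B\to M\otimes_B Q(B)\bigr).
\]
So it gives both inclusions at once, and your generic-point step is redundant. The paper's quotient formulation is shorter, but it rests on that unproved preservation statement.
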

		\begin{proof}
			That $f^*(\tors(\Mcal))\subseteq\tors(f^*(\Mcal))$ follows as torsion sections are exactly those sections that vanish at the generic points $\eta_i$ of the irreducible components of $Y$ and 
			\[
			f^*(\tors(\Mcal))_{\eta_i}=\tors(\Mcal)_{f(\eta_i)} \otimes_{\Ocal_{X,f(\eta_i)}} \Ocal_{Y,\eta_i}=0
			\]
			as $f(\eta_i)$ is the generic point of an irreducible component of $X$ (by \cite[Corollaire 2.3.4]{EGAIV2}).

			For the opposite inclusion note that $f^*(\Mcal/\tors(\Mcal))$ is torsion free (pullback along flat morphisms preserves torsion freeness).
			Therefore $\tors(f^*(\Mcal))$ becomes zero in $f^*(\Mcal/\tors(\Mcal))=f^*(\Mcal)/f^*(\tors(\Mcal))$, i.e.\ $\tors(f^*(\Mcal))\subseteq f^*(\tors(\Mcal))$. 
		\end{proof}
		
		\begin{lemma}[Blowing up commutes with flat base change]\label{lem: compat flat smooth cov}
			Let $\Mcal$ be a coherent module of constant generic rank over a reduced locally Noetherian scheme $X$ and $f\colon Y\to X$ a flat morphism with $Y$ a reduced locally Noetherian scheme.
			Then the square
			\[
			\begin{tikzcd}
				{\Bl_{f^*\Mcal}(Y)} & {\Bl_{\Mcal}(X)} \\
				Y & {X\rlap{ ,}}
				\arrow[from=1-1, to=1-2]
				\arrow[from=1-2, to=2-2]
				\arrow["q", from=1-1, to=2-1]
				\arrow["f", from=2-1, to=2-2]
				\arrow[phantom, "\lrcorner", very near start,, from=1-1, to=2-2]
			\end{tikzcd}
			\]
			with vertical morphisms being blow-ups and upper horizontal morphism induced by the universal property of the blow-up, is cartesian.
			In particular, this holds for $f\colon Y\to X$ any smooth cover.
		\end{lemma}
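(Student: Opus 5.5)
We argue via the universal property, showing that the fibre product $Y\times_X \Bl_\Mcal(X)$ is itself a universal flattening of $f^*\Mcal$ over $Y$. Set $Z:=\Bl_\Mcal(X)$ with blow-up $p\colon Z\to X$, and let $W:=Y\times_X Z$ with projections $g\colon W\to Z$ and $q'\colon W\to Y$. The morphism $g$ is flat, being a base change of $f$.

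First we check that $W$ is reduced and locally Noetherian. Local Noetherianity is clear when $f$ is locally of finite type; in general one argues that $q'$ is of finite type (blow-ups are of finite type) over the locally Noetherian $Y$. Reducedness is more delicate, since a flat morphism with reduced source and target need not have reduced fibre products. I would handle this via the fibrewise description: $W\to Y$ is a blow-up of a coherent ideal. Following the construction cited above, $p$ is the blow-up of a Fitting-type ideal $\mathcal{J}\subseteq\Ocal_X$. Blow-ups commute with flat base change, so $W=\Bl_{f^{-1}\mathcal{J}}(Y)$, and this is reduced because $Y$ is reduced (the Rees algebra of an ideal in a reduced ring is a subring of a polynomial ring over it).

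Next we show that $q'$ is a flattening of $f^*\Mcal$. We have $q'^*f^*\Mcal = g^*p^*\Mcal$, and by \Cref{lem: pb flat morph} applied to the flat map $g$,
\[
\tors(g^*p^*\Mcal)=g^*\tors(p^*\Mcal),
\]
so that $q'^\flat(f^*\Mcal)=g^*p^\flat(\Mcal)$. This is locally free of rank $r$.

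Then we verify universality. Let $h\colon T\to Y$ be a flattening of $f^*\Mcal$. We would like $f\circ h$ to be a flattening of $\Mcal$, which would give a unique map $T\to Z$ and hence a unique map $T\to W$. The issue is that $h^*f^*\Mcal=(fh)^*\Mcal$, so $h^\flat(f^*\Mcal)=(fh)^\flat(\Mcal)$ tautologically, and $fh$ is indeed a flattening. Uniqueness follows from the uniqueness in the universal property of $Z$, together with the fact that maps to $W$ over $Y$ correspond to maps to $Z$ compatible with $f$. Conversely, $q'$ factors through $\Bl_{f^*\Mcal}(Y)$. Both $W$ and $\Bl_{f^*\Mcal}(Y)$ are therefore universal flattenings, so the comparison map is an isomorphism, and it agrees with the map induced by the universal property. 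Smooth covers are flat, which gives the final clause.

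The main obstacle is the reducedness of $W$ (the flattening definition requires a reduced source). If the Rees-algebra argument is unavailable, the fallback is to take $W_{\mathrm{red}}$ and show it satisfies the universal property. That route, however, then needs a separate argument that $W=W_{\mathrm{red}}$ in order to obtain cartesianness.
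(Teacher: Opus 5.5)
Your proposal is correct and follows essentially the same route as the paper: apply \Cref{lem: pb flat morph} to the flat base change $g$ to get $(q')^\flat(f^*\Mcal)=g^*(p^\flat\Mcal)$, and note that $h^\flat(f^*\Mcal)=(fh)^\flat(\Mcal)$, so that $Y\times_X\Bl_\Mcal(X)$ satisfies the universal property of $\Bl_{f^*\Mcal}(Y)$. Your explicit check that this fibre product is reduced and locally Noetherian, by identifying it with the blow-up of $Y$ along $f^{-1}\mathcal{J}\cdot\Ocal_Y$, fills in a point the paper leaves implicit; you should also record two things the paper checks: that $f^*\Mcal$ has the same constant generic rank (flat maps send generic points of components to generic points of components), and that a smooth cover $Y$ is automatically reduced and locally Noetherian.
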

		\begin{proof}
			As $f$ is flat the generic point of any irreducible component of $Y$ gets mapped to the generic point of an irreducible component of $X$ (by \cite[Corollaire 2.3.2)]{EGAIV2}).
			Consequently $f^*\Mcal$ is of the same rank as $\Mcal$, and thus
			\[
			(fq)^\flat(\Mcal)=q^*f^*\Mcal/\tors(q^*f^*\Mcal)=q^\flat (f^*\Mcal)
			\]
			is flat of same rank as $\Mcal$ by definition of $q$. 
			Thus, by the universal property, there exists a morphism $\Bl_{f^*\Mcal}(Y)\to \Bl_{\Mcal}(X)$ making the diagram commute.
			That the diagram is cartesian follows from the universal properties of the blow-up and the fibre product using \Cref{lem: pb flat morph}, since, labelling the morphisms
			\[
			\begin{tikzcd}
				{Y\times_X\Bl_{\Mcal}(X)} & {\Bl_{\Mcal}(X)} \\
				Y & {X\rlap{ ,}}
				\arrow["g", from=1-1, to=1-2]
				\arrow["p", from=1-2, to=2-2]
				\arrow["q'", from=1-1, to=2-1]
				\arrow["f", from=2-1, to=2-2]
				\arrow[phantom, "\lrcorner", very near start,, from=1-1, to=2-2]
			\end{tikzcd}
			\]
			we have that
			\begin{multline*}
				(q')^\flat(f^*\Mcal) = (q')^*f^*\Mcal/\tors((q')^*f^*\Mcal) = g^*p^*\Mcal/\tors(g^*p^*\Mcal)\\ 
				= g^*p^*\Mcal/g^*\tors(p^*\Mcal) = g^*(p^*\Mcal/\tors(p^*\Mcal))= g^* (p^\flat\Mcal)
			\end{multline*}
			is flat of same rank as $\Mcal$.
			
			For the second statement, note that $Y$ is locally Noetherian by fppf descent \cite[\href{https://stacks.math.columbia.edu/tag/034C}{Tag 034C}]{stacks-project} and reduced by smooth descent \cite[\href{https://stacks.math.columbia.edu/tag/034E}{Tag 034E}]{stacks-project}.
		\end{proof}
		
		The following is more general than what we will need, but the general `gluing' argument will be used in step 3 of the proof of \Cref{thm: azumification}.
		We extend the notion of flattening to stacks in the obvious way, but we require all morphisms involved to be representable. 
		To emphasise this we use the terminology \emph{universal representable flattening}.
		\begin{proposition}
			For any coherent module $\Mcal$ of constant generic rank over a reduced locally Noetherian algebraic stack (resp.\ Deligne--Mumford stack, algebraic space) $\Xcal$, there exists a universal representable flattening of $\Mcal$ by a reduced locally Noetherian algebraic stack (resp.\ Deligne--Mumford stack, algebraic space).
		\end{proposition}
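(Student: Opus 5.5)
The plan is to construct the universal representable flattening by descent from a smooth atlas, using \Cref{lem: compat flat smooth cov} to obtain descent data.

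First choose a smooth atlas $p\colon U\to\Xcal$ with $U$ a scheme. For Deligne--Mumford stacks take it étale, and for algebraic spaces take an étale atlas; in the scheme case there is nothing to do. Then $U$ is reduced and locally Noetherian. Set $R:=U\times_\Xcal U$. Since $\Xcal$ need not have representable diagonal in the general algebraic stack case, $R$ is a priori only an algebraic space; in that case I would first treat the algebraic-space case and then use it for $R$, or choose a further atlas of $R$. The two projections $s,t\colon R\rightrightarrows U$ are smooth (resp. étale). Because smoothness preserves constant generic rank, as in the proof of \Cref{lem: compat flat smooth cov}, $p^*\Mcal$ has constant generic rank $r$ and its blow-up $\Bl_{p^*\Mcal}(U)\to U$ exists.

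Next, apply \Cref{lem: compat flat smooth cov} to $s$ and to $t$. This gives canonical isomorphisms
\[
s^*\Bl_{p^*\Mcal}(U)\cong \Bl_{s^*p^*\Mcal}(R)=\Bl_{t^*p^*\Mcal}(R)\cong t^*\Bl_{p^*\Mcal}(U),
\]
using the canonical identification $s^*p^*\Mcal\cong t^*p^*\Mcal$. To make sense of $\Bl$ over the algebraic space $R$, one first passes to an étale cover of $R$ by a scheme and uses the same lemma; this is essentially the algebraic-space case. The isomorphisms come from the universal property, so they are unique. Hence the cocycle condition on $U\times_\Xcal U\times_\Xcal U$ holds automatically, by uniqueness of factorisations through a universal flattening. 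Thus $\Bl_{p^*\Mcal}(U)\to U$ is a morphism of spaces equipped with descent data along $U\to\Xcal$. Since it is projective, in particular representable, it descends to a representable morphism $\pi\colon\Ycal\to\Xcal$; here one uses effectivity of descent for morphisms that are representable by algebraic spaces along fppf coverings of stacks. The atlas $\Ycal\times_\Xcal U=\Bl_{p^*\Mcal}(U)\to\Ycal$ shows that $\Ycal$ is reduced and locally Noetherian. It is Deligne--Mumford, resp. an algebraic space, when $\Xcal$ is, because $\pi$ is representable.

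Then check flatness and universality. The module $\pi^\flat\Mcal$ is locally free of rank $r$, since this can be checked on the atlas. There \Cref{lem: pb flat morph} identifies the pullback of $\pi^\flat\Mcal$ with $q^\flat(p^*\Mcal)$. For a representable flattening $g\colon\Zcal\to\Xcal$, base change along $p$ gives $Z_U:=\Zcal\times_\Xcal U$, an algebraic space. Its pullback $g_U^\flat(p^*\Mcal)$ is flat of rank $r$, again by \Cref{lem: pb flat morph} applied to the smooth map $Z_U\to\Zcal$. By the universal property of $\Bl_{p^*\Mcal}(U)$, extended to algebraic-space sources by étale-local uniqueness and gluing, $g_U$ factors uniquely through $\Bl_{p^*\Mcal}(U)$. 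Uniqueness makes this factorisation compatible with the two pullbacks to $R$, so it descends to a unique $\Zcal\to\Ycal$ over $\Xcal$.

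The main obstacle is the bootstrapping issue. The universal property of the blow-up has to be extended from scheme sources to algebraic spaces, and the construction has to work over $R$ when $R$ is not a scheme. I would handle this by proving the algebraic-space case first, applying the scheme argument to an étale atlas and étale equivalence relation of schemes. Only then would I run the argument for general stacks, with the algebraic-space case supplying blow-ups over $R$ and the universal property for algebraic-space sources.
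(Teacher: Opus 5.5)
Your proposal is correct and follows essentially the paper's route. You blow up on an atlas, use \Cref{lem: compat flat smooth cov} together with uniqueness from the universal property to get compatible gluing data (including the cocycle condition), and bootstrap through the algebraic-space case to handle $U\times_{\Xcal}U$ not being a scheme. Phrasing the gluing as effective descent of a representable morphism along $U\to\Xcal$ is equivalent to the paper's forming the quotient of the pulled-back groupoid. Your explicit checks of flatness and universality on the atlas fill in what the paper's sketch leaves implicit.
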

		\begin{proof}[Sketch of proof]
			This follows from the fact that blowing up is universal, commutes with flat base change (\Cref{lem: compat flat smooth cov}) and abstract nonsense: it is determined locally.
					
			Suppose $\Xcal$ is an algebraic space.
			Any algebraic space is an étale equivalence relation of schemes \cite[\href{https://stacks.math.columbia.edu/tag/0261}{Tag 0261}]{stacks-project} and any coherent module over $\Xcal$ is determined by data over this equivalence relation \cite[\href{https://stacks.math.columbia.edu/tag/03M3}{Tag 03M3}]{stacks-project}. 
			The result follows by blowing up the local data and gluing; that the blown-up data is compatible follows by \Cref{lem: compat flat smooth cov}.
			Next, when $\Xcal$ is a algebraic (resp.\ Deligne--Mumford) stack exactly the same can be done.
			An algebraic (resp.\ Deligne--Mumford) stack is a smooth (resp.\ \'etale) groupoid of algebraic spaces \cite[\href{https://stacks.math.columbia.edu/tag/04T3}{Tag 04T3}]{stacks-project} and any coherent module over $\Xcal$ is determined by data over this groupoid \cite[\href{https://stacks.math.columbia.edu/tag/06WT}{Tag 06WT}]{stacks-project}.
			The result then follows by using the analogous statement of \Cref{lem: compat flat smooth cov} for algebraic spaces.
		\end{proof}
	
\section{Proof of \Cref{thm: azumification}}\label{sec: proof}

	Suppose $\Acal$ is an order over a scheme $X$ that is locally free as $\Ocal_X$-module.
	In this case being Azumaya is equivalent to being Azumaya in codimension one.
	Therefore the strategy of the proof is to achieve the following:
	\begin{enumerate}
		\item\label{item: 1} make $\Acal$ Azumaya in codimension one,
		\item\label{item: 2} make $\Acal$ locally free.
	\end{enumerate}
	For \ref{item: 1} we will use root stacks and for \ref{item: 2} we will use blow-ups.
	Of course, the order and the exact way of doing this is important and so requires some care.

	\subsection*{Step 1}
		Let $(X,\Acal)$ be as in \Cref{thm: azumification}.
		First resolve $X$ and replacing $\Acal$ by
		\[
			\pi^\flat\Acal:=\pi^*\Acal/\tors(\pi^*\Acal), 
		\]
		where $\pi$ denotes the resolution (the pullback will generally only remain torsion free when $\pi$ is flat).
		We may thus assume, by looking per connected component, that $X$ is a smooth variety.
		Moreover, as the morphism $\pi$ is birational, and hence induces an isomorphism at the generic points, $\pi^\flat\Acal$ remains an order and the quotient map $\pi^*\Acal\to \pi^\flat\Acal$ is generically an isomorphism. 
		
		Next by an embedded resolution of singularities \cite[Corollary 3]{Hironaka}, see also e.g.\ \cite[Theorem 1.10]{BierstoneMilman} or \cite[Theorem 1.0.1]{Wlodarczyk}, there exists a further resolution of singularities $\pi\colon X'\to X$ with $\pi^{-1}(\Ram(\Acal))$ a simple normal crossing (snc) divisor.

		We have $\Ram(\pi^\flat\Acal)\subseteq\pi^{-1}(\Ram(\Acal))$ as being Azumaya is closed under base change.
		Thus, by replacing the pair $(X,\Acal)$ by $(X',\pi^\flat\Acal)$, and again taking the quotient $\pi^*(\Acal)\to \pi^\flat(\Acal)$ for the morphism required in the theorem, we reduce to the case that $X$ is a smooth variety and the ramification locus of $\Acal$ is contained in a snc divisor.
		Furthermore, by replacing the order $\Acal$ by a maximal order containing it we may assume $\Acal$ is maximal.
		Lastly, note that both resolutions in this step were compositions of blow-ups.
	
	\subsection*{Step 2}
		Now suppose $X$ is a smooth variety and $\Acal$ is a maximal order with ramification locus $\Ram(\Acal)$ contained in a snc divisor.
		We use a root stack to get rid of the ramification divisor of $\Acal$, i.e.\ all the codimension one points in $\Ram(\Acal)$.
	
		Let $\bE=(E_1,\dots,E_n)$ and $\br=(r_1,\dots,r_n)$ be the ramification data of $\Acal$, see \Cref{subsubsec: global ram}.
		Put
		\[
			\begin{tikzcd}
				\Xcal:= X_{\br^{-1}\bE} \arrow[r, "\pi"] & X
			\end{tikzcd}
		\]
		the $\br$-th root stack with respect to $\bE$ and let $\Bcal$ denote the reflexive Azumaya order from \Cref{prop:rooting}; and note that there is a induced generic isomorphisms $\pi^*\Acal\to \Bcal$ given by multiplication $\Acal\otimes_{\mathcal{O}_X}\Z(\widetilde{\Acal})\to \widetilde{\Acal}$ (notation from loc.\ cit.).
		Moreover, since the divisor $\bE$ is snc, it follows that $\Xcal$ is smooth.
				
		Replacing the pair $(X,\Acal)$ by $(\Xcal,\Bcal)$ we have reduced to the case where the order $\Acal$ is reflexive and Azumaya in codimension one over a smooth quasi-compact separated Deligne--Mumford stack $\Xcal$ (with trivial generic stabiliser).
			
	\subsection*{Step 3}
		We have reduced to the setting where $\Xcal$ is a smooth quasi-compact separated Deligne--Mumford stack (with trivial generic stabiliser) and the order $\Acal$ is reflexive and Azumaya in codimension one.
		Let $\Zcal:=\Ram(\Acal)$ be the ramification locus, then by assumption $\Zcal$ has codimension at least $2$ in $\Xcal$.
		Denote $\Ucal:=\Xcal\setminus\Zcal$ the complement.
		By purity for smooth Deligne--Mumford stacks, see e.g.\ \cite[Lemma 5.1]{LoughranSantens}, there exists an $\alpha \in H^2_{\acute{e}t}(\Xcal,\GG_m)$ restricting to $\Acal|_{\Ucal}\in H^2_{\acute{e}t}(\Ucal,\GG_m)$.
		Furthermore, by \cite[\href{https://stacks.math.columbia.edu/tag/01FW}{Tag 01FW}]{stacks-project} there exists a splitting of $\alpha$, i.e.\ an \'etale cover $v\colon V\to \Xcal$ by a (smooth) scheme $V$ with $\alpha|_V$ zero in $H^2_{\acute{e}t}(V,\GG_m)$.  
		Consequently $\Acal|_{V\times_{\Xcal}\Ucal}=\alpha|_{V\times_{\Xcal}\Ucal}=0$ and thus is of the form $\sEnd_{V\times_{\Xcal}\Ucal}(\Ecal)$ for some locally free sheaf $\Ecal$.
		Now $j\colon V\times_{\Xcal}\Ucal\to V$ is an open immersion whose complement has codimension at least two.
		As $\Acal$ is reflexive this implies that $\Acal|_V=\sEnd_V(j_*\Ecal)$ (use e.g.\ \cite[\href{https://stacks.math.columbia.edu/tag/0EBJ}{Tag 0EBJ}]{stacks-project} and that $V$ is the disjoint union of integral schemes).
		Thus, $\Acal$ is \'etale locally given by the endomorphism sheaf of a reflexive module.
		The idea is now to blow-up $\Mcal:=j_*\Ecal$ on $V$, and then, by keeping track of the descent data, glue the endomorphism sheaf of its flattening to an Azumaya algebra over a Deligne--Mumford stack. 
		 
		Denote $p_i\colon V_2:=V\times_\Xcal V\to V$ the two projections from the double intersection, and $p_{ij}\colon V_3:=V\times_\Xcal V\times_\Xcal V\to V_2$ and $p_{i}\colon V_3\to V$ the various projections from the triple intersection.
		Note that $V_2$ is a scheme (and thus also $V_3$) as the diagonal of $\Xcal$ is finite (and hence representable by schemes).
		The descent data of $\Acal$ consists of an isomorphism $\alpha\colon p_1^*(\Acal|_V)\isoto p_2^*(\Acal|_V)$ over $V_2$ satisfying the cocycle condition
		\[
			\begin{tikzcd}[column sep= -1em, row sep= 3em]
				& {p_{12}^*p_1^*\Acal|_V} &[3.5em] {p_{12}^*p_2^*\Acal|_V} \\
				{p_{13}^*p_1^*\Acal|_V} &&& {p_{23}^*p_1^*\Acal|_V} \\
				& {p_{13}^*p_2^*\Acal|_V} & {p_{23}^*p_2^*\Acal|_V}
				\arrow[equal, from=2-1, to=1-2]
				\arrow["{p_{12}^*\alpha}", from=1-2, to=1-3]
				\arrow[equal, from=1-3, to=2-4]
				\arrow["{p_{23}^*\alpha}", from=2-4, to=3-3]
				\arrow["{p_{13}^*\alpha}"', from=2-1, to=3-2]
				\arrow[equal, from=3-2, to=3-3]
			\end{tikzcd}
		\]
		over $V_3$.
		
		Let us now translate this descent data to descent data in terms of $\Mcal$ instead of $\Acal|_V= \sEnd_V(\Mcal)$.
		The isomorphism $\alpha$ induces an isomorphism 
		\begin{equation}\label{eq: gluing data}
			\begin{tikzcd}
				\sEnd_{V_2}(p_1^*\Mcal)=p_1^*\sEnd_V(\Mcal)\arrow[r, "\alpha"] &p_2^*\sEnd_V(\Mcal)=\sEnd_{V_2}(p_2^*\Mcal)
			\end{tikzcd}
		\end{equation}
		which in turn is induced by an isomorphism $\gamma\colon \Lcal\otimes_{V_2}p_1^*\Mcal\isoto p_2^*\Mcal$ with $\Lcal$ an invertible module over $V_2$. 
		Indeed, by reflexive Morita theory, taking 
		\[
			\Lcal:=\sHom_{p_1^*(\Acal|_V)}(p_1^*\Mcal,p_2^*\Mcal),
		\]
		where the $p_1^*(\Acal|_V)$-action on $p_2^*\Mcal$ comes from $\alpha$, and taking $\gamma$ to be the evaluation does the trick. 
		Here, we used the fact that $V_2$ is smooth, hence locally factorial, to conclude that $\Lcal$ is invertible instead of merely reflexive of rank one.
		
		The cocycle condition translates into the following condition on $\gamma$.
		There exists an isomorphism
		\[
			\phi\colon  p_{23}^*\Lcal\otimes_{V_3}p_{12}^*\Lcal\isoto p_{12}^*\Lcal 
		\]
		satisfying
		\[
			\begin{tikzcd}[column sep=3.5em, row sep=2.5em]
				{p_{23}^*\Lcal\otimes_{V_3}p_{12}^*\Lcal\otimes_{V_3}p_{1}^*\Mcal} & {p_{23}^*\Lcal\otimes_{V_3}p_{2}^*\Mcal} \\
				{p_{13}^*\Lcal\otimes_{V_3}p_{1}^*\Mcal} & {p_3^*\Mcal}\rlap{ .}
				\arrow["{1\otimes p_{12}^*\gamma}", from=1-1, to=1-2]
				\arrow["{p_{23}^*\gamma}", from=1-2, to=2-2]
				\arrow["{\phi\otimes 1}"', from=1-1, to=2-1]
				\arrow["{p_{13}^*\gamma}"', from=2-1, to=2-2]
			\end{tikzcd}
		\]
		The data $(V\to \Xcal)$, $\Mcal$, $\Lcal$, $\gamma$ and $\phi$ reconstructs the order $\Acal$.
	
		We can blow-up $V$ at $\Mcal$ to obtain $f\colon V':=\Bl_{\Mcal}(V)\to V$, and by \Cref{lem: compat flat smooth cov} (and importantly as twisting by a line bundle does not change the blow-up) we obtain a diagram 
		\begin{equation}\label{eq: groupoid morph}
			\begin{tikzcd}
				V'_2 & {\Bl_{\Mcal}(V)} &  \\
				V_2={V\times_{\Xcal} V} & V & \Xcal
%				\arrow["\pi", from=1-3, to=2-3]
%				\arrow["b", from=1-2, to=1-3]
				\arrow[from=2-2, to=2-3]
				\arrow["f", from=1-2, to=2-2]
				\arrow["p_2"', shift right=0.75, from=2-1, to=2-2]
				\arrow["p_1", shift left=0.75, from=2-1, to=2-2]
				\arrow["g"', from=1-1, to=2-1]
				\arrow["q_2"', shift right=0.75, from=1-1, to=1-2]
				\arrow["q_1", shift left=0.75, from=1-1, to=1-2]
				\arrow["\lrcorner"{anchor=center, pos=0.125}, draw=none, from=1-1, to=2-2]
			\end{tikzcd}
		\end{equation}
		where
		\[
			\begin{tikzcd}[ampersand replacement=\&, row sep= 0.6em, column sep= -0.75em]
				{V'_2 } \& {\Bl_{p_1^*\Mcal}(V_2)} \& {\Bl_{\Lcal\otimes_{V_2}p_1^*\Mcal}(V_2)} \& {\Bl_{p_2^*\Mcal}(V_2)} \& \\
				\& {V_2\times_{p_1,V} \Bl_{\Mcal}(V)} \&\& {V_2\times_{p_2,V} \Bl_{\Mcal}(V)}\rlap{ .} \&
				\arrow[draw=none, from=1-1, to=1-2, ":=" marking]
				\arrow[equal, from=1-2, to=1-3]
				\arrow[equal, from=1-3, to=1-4]
				\arrow[equal, from=1-2, to=2-2]
				\arrow[equal, from=1-4, to=2-4]
					\arrow[equal, from=1-4, to=2-4]
			\end{tikzcd}
		\]
		In particular, as $ {V_2\times_{p_1,V} \Bl_{\Mcal}(V)}= {V_2\times_{p_2,V} \Bl_{\Mcal}(V)}$, we naturally have that $V'_2\rightrightarrows V'$ is an \'etale groupoid and the left part of \eqref{eq: groupoid morph} gives a morphism of groupoids.
		The isomorphism $\gamma$ gives rise to an isomorphism
		\[
		\gamma'\colon g^*\Lcal\otimes_{V'_2}q_1^*f^\flat\Mcal\isoto q_2^*f^\flat\Mcal
		\]
		as
		\begin{align*}
			q_2^*f^\flat\Mcal = g^\flat p_2^*\Mcal = g^\flat ( \Lcal\otimes_{V_2}p_1^* \Mcal) = g^*\Lcal\otimes_{V'_2}g^\flat p_1^* \Mcal = g^*\Lcal\otimes_{V'_2}q_1^* f^\flat\Mcal. 
		\end{align*}
		This gives us an isomorphism $q^*_1\sEnd(f^\flat\Mcal)\isoto q^*_2\sEnd(f^\flat\Mcal)$ satisfying the cocycle condition.
		
		Putting $\Ycal:=[V'/V'_2]$ we obtain a (not-necessarily smooth any more) Deligne--Mumford stack $\Ycal$ together with a coherent sheaf of algebras $\Bcal$ over $\Ycal$.
		Furthermore, by construction, this comes equipped with an \'etale cover $b\colon V'\to \Ycal$ with $b^*\Bcal=\sEnd(f^\flat\Mcal)$, i.e.\ $\Bcal$ is Azumaya, and a (proper birational) morphism $\pi\colon \Ycal\to \Xcal$ which is locally a blow-up
		\[
			\begin{tikzcd}[ampersand replacement=\&]
				{\Bl_{\Mcal}(V)} \& \Ycal \\
				V \& \Xcal
				\arrow["\pi", from=1-2, to=2-2]
				\arrow["b", from=1-1, to=1-2]
				\arrow[from=2-1, to=2-2]
				\arrow["f"', from=1-1, to=2-1]
				\arrow["\lrcorner"{anchor=center, pos=0.125}, draw=none, from=1-1, to=2-2]
			\end{tikzcd}
		\]	
		(use e.g.\ \cite[\href{https://stacks.math.columbia.edu/tag/04ZN}{Tag 04ZN}]{stacks-project}).
		In addition, we have a morphism $\pi^*\Acal\to\Bcal$ given locally by
		\[
			f^*\sEnd(\Mcal)\to \sEnd(f^*\Mcal) \to \sEnd(f^\flat\Mcal),
		\]
		where the last morphism is given by first post-composing 
		\[
			\sHom(f^*\Mcal,f^*\Mcal)\to \sHom(f^*\Mcal,f^\flat\Mcal)
		\] 
		and noting that $\sHom(f^*\Mcal,f^\flat\Mcal)=\sHom(f^\flat\Mcal,f^\flat\Mcal)$.
		It is clear that $\pi^*\Acal\to\Bcal$ is generically an isomorphism.

	\subsection*{Step 4}
		We have now reduced to the case where the order $\Acal$ is an Azumaya algebra, but the Deligne--Mumford stack $\Xcal$ is not-necessarily smooth any more, nor is the morphism $\pi$ used to arrive at this situation a composition of blow-ups (in the previous step the morphism is only so locally).
		However, as blow-ups are `cofinal' \cite[Theorem 3.1]{Rydh} one can take further blow-ups of $\Xcal$ to ensure that $\pi$ is a composition of (stacky) blow-ups. 
		Lastly, a final resolution of singularities (by blow-ups) of $\Xcal$, e.g.\ these exist as smooth functorial resolutions exist in characteristic zero and by cofinality of blow-ups, and replacing $\Acal$ by its pullback finishes the proof of \Cref{thm: azumification}.	
		
		\begin{remark}
			By further blowing up one can achieve even more favourable situation, e.g.\ using the destackifications from \cite{Bergh,BerghRydh} one can ensure that the induced morphism on coarse spaces is a resolution of singularities (by a sequence of blow-ups).
		\end{remark}		
		
\bibliographystyle{amsalpha}
\bibliography{bibliography}
\end{document}